\numberwithin{equation}{section}
\def\eq#1{(\ref{#1})}
\def\neweq#1{\begin{equation}\label{#1}}
\def\endeq{\end{equation}}
\def\RR{{\mathbb R} }
\def\di{\displaystyle}
\def\la{\lambda}
\def\ri{\rightarrow}
\def\ep{\varepsilon}
\def\vp{\varphi_1}
\def\o{\Omega }
\def\oo{\overline\Omega }
\def\mA{{\mathcal A}}
\def\mF{{\mathscr F}}
\newtheorem{them}{Theorem}[section]
\newtheorem{coro}[them]{Corollary}
\newtheorem{lema}[them]{Lemma}
\newtheorem{pro}[them]{Proposition}
\title{A biharmonic equation with singular nonlinearity}
\author{Marius Ghergu\\
School of Mathematical Sciences,\\
University College Dublin,\\ Belfield, Dublin 4, Ireland\\
E-mail: {\tt marius.ghergu@ucd.ie}}
\begin{document}

\maketitle

\medskip

\begin{abstract} We study the biharmonic equation
$\Delta^2 u =u^{-\alpha}$, $0<\alpha<1$, in a smooth and
bounded domain $\Omega\subset\RR^n$, $n\geq 2$, subject to Dirichlet boundary conditions. 
Under some suitable assumptions on $\o$ related to
the positivity of the Green function for the biharmonic operator, we prove the existence and
uniqueness of a solution.

\medskip

\noindent {\bf Keywords} Biharmonic operator; singular nonlinearity;
Green function; integral equation

\medskip

\noindent{\bf Mathematics Subject Classifications (2000)}  35J40;  35J08; 45G05; 47H10
\end{abstract}

\section{Introduction and the main results}
\label{intro}

\medskip

In this paper we study the biharmonic elliptic problem

\neweq{b}
\left\{\begin{aligned}
&\Delta^2 u =u^{-\alpha}\ , \; u>0 &&\mbox{ in } \Omega,\\
&u=\partial_\nu u=0&& \mbox{ on }\partial\Omega,
\end{aligned}\right.
\endeq
where $0<\alpha<1$, $\o\subset\RR^n$ ($n\geq 2$) is a smooth bounded
domain in the sense that we will describe in the following, $\nu$
is the exterior unit normal at $\partial\o$ and $\partial_\nu=\frac{\partial}{\partial\nu}$ 
is the outer normal derivative at $\partial\o$. We denote by
$G(\cdot,\cdot)$ the Green function associated with the biharmonic
operator $\Delta^2$ subject to Dirichlet boundary conditions, that is, for all
$y\in\o$, $G(\cdot, y)$ satisfies in the distributional sense:
$$
\left\{\begin{aligned}
&\Delta^2 G(\cdot, y) =\delta_y(\cdot) &&\mbox{ in } \Omega,\\
&G(\cdot, y) =\partial_\nu G(\cdot, y) =0&& \mbox{ on
}\partial\Omega.
\end{aligned}\right.
$$
The study of Green function for the biharmonic equation goes
back to Boggio \cite{bog1} in 1901. He proved that the Green
function is positive in any ball of $\RR^n$. Boggio \cite{bog2} and
Hadamard \cite{had1} conjectured that this fact should be true at
least in any smooth convex domain of $\RR^n$.

Starting with the late 1940s, various counterexamples have been
constructed that disprove the Boggio-Hadamard conjecture. For
instance, if a domain in $\RR^2$ has a right-angle, then the
associated Green function fails to be everywhere positive (see
Coffman and Duffin \cite{coff}). A similar result holds for thin
ellipses: Garabedian \cite{gara} found that in an ellipse in $\RR^2$
 with the ratio of the half axes $\simeq 2$, the Green function for the
biharmonic operator changes sign (for an elementary proof, see also
Shapiro-Tegmark \cite{shap}). In turn, if the ellipse is close to a
ball in the plane, Grunau and Sweers \cite{gsm} proved that the
Green function is positive. Recently, Grunau and Sweers \cite{gs0,
gs1, gs2}, Grunau and Robert \cite{gr} provided interesting
characterizations of the regions where the Green function is
negative. They also obtained that if a domain is sufficiently close
to the unit ball in a suitable $C^{4,\gamma}$-sense, then the biharmonic
Green function under Dirichlet boundary condition is positive.

It is worth noting here that the positivity property of the Green
function for the biharmonic operator is a special feature of the
prescribed boundary condition. Indeed, if instead of Dirichlet boundary
condition one assumes Navier boundary condition (that is, $u =
\Delta u = 0$ on $\partial\o$), then a straightforward application
of the second order comparison principle yields the positivity of
the Green function. However, even under Navier conditions there is
in general no positivity result for the Green function when the
biharmonic operator is perturbed (see, e.g., \cite{coff1, ka}).

In this paper we assume that $\o\subset \RR^n$, $n\geq 2$, is a
bounded domain that satisfies:
\begin{enumerate}
\item[$(A1)$]
the boundary $\partial\o$ is of class $C^{16}$ if $n=2$ and of class
$C^{12}$ if $n\geq 3$;
\item[$(A2)$] the Green function $G(\cdot,\cdot)$ is positive.
\end{enumerate}

The assumption $(A1)$ on the regularity of $\partial\o$ goes back to
Krasovski$\breve{{\rm \i}}$ \cite{k} and is taken from Dall'Acqua
and Sweers \cite{d} where sharp upper bounds for the Green
function are obtained. The need for condition $(A2)$ will become
more clear once we specify what it is understood by a solution of
\eq{b}. We say that $u$ is a solution of \eq{b} if
$$
u\in C(\oo),\quad u>0\quad\mbox{in }\o,$$ and $u$ satisfies the
integral equation
\neweq{intg}
u(x)=\int_\o G(x,y) u^{-\alpha}(y) dy\quad\mbox{ for all }x\in\o.
\endeq

Also remark that condition $(A2)$ above implies the standard maximum
principle for the biharmonic operator in $\o$.

Although there are several results for biharmonic equations
involving exponential or power-type nonlinearities with positive
exponents \cite{arioli,fer,gazz,gazz1}, to the best of our
knowledge, there is no such a study for \eq{b}. Our main result
concerning \eq{b} is the following.

\begin{them}\label{t1}
Assume $0<\alpha<1$ and conditions $(A1)$, $(A2)$ hold. Then, the
problem \eq{b} has a unique solution $u$ and there exist $c_1,c_2>0$
such that
\neweq{delta2}
c_1\delta^2(x)\leq u(x)\leq c_2 \delta^2(x)\quad\mbox{
in } \o,
\endeq
where $\delta(x)={\rm dist}(x,\partial\o)$.
Moreover, $u\in C^2(\oo)$ and if $0<\alpha<1/2$ then $u\in
C^3(\oo)$.
\end{them}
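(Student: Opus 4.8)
The plan is to establish existence and uniqueness via a monotone iteration / sub-supersolution scheme carried out at the level of the integral equation \eq{intg}, and then bootstrap regularity using the sharp Green function estimates of Dall'Acqua--Sweers together with elliptic regularity theory. The natural operator to study is $T u(x) = \int_\o G(x,y) u^{-\alpha}(y)\,dy$, which by $(A2)$ is order-reversing on positive functions: if $0 < u \le v$ then $Tv \le Tu$. Hence $T\circ T$ is order-preserving, which is the structural fact that drives the argument.

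First I would construct ordered barriers of the form $c\,\delta^2$. The key input is that for the biharmonic Green function one has two-sided estimates, and in particular $\int_\o G(x,y)\,\delta^{-2\alpha}(y)\,dy \asymp \delta^2(x)$ when $2\alpha < \min\{4, n\}$ (automatic here since $0<\alpha<1$ and $n \ge 2$) — this is where condition $(A1)$ enters, since the Krasovski\u\i/Dall'Acqua--Sweers regularity is exactly what guarantees the matching upper bound $G(x,y) \le C\,\delta^2(x)\delta^2(y)|x-y|^{-n}\cdot(\text{log/power corrections})$. Using this, $\underline u = c_1 \delta^2$ and $\overline u = c_2 \delta^2$ for suitable $0 < c_1 < c_2$ satisfy $T\overline u \ge \underline u$, $T \underline u \le \overline u$, and (after possibly enlarging the gap) $\underline u \le T\underline u$ is generally \emph{not} available directly because $T$ reverses order; instead I would set up the iteration $u_0 = \overline u$, $u_1 = T u_0 \ge \underline u$, $u_2 = T u_1 \le \overline u$, and show $u_0 \ge u_2 \ge \underline u$ and $u_1 \le u_3 \le \overline u$, so that the even iterates decrease and the odd iterates increase, both staying trapped between $c_1\delta^2$ and $c_2\delta^2$. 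Monotone convergence (dominated by the integrable majorant $c_2\delta^2 \cdot \delta^{-2\alpha}$, integrability again needing $2\alpha<n$) gives limits $u_* \le u^*$ with $u^* = T u_*$ and $u_* = T u^*$, both squeezed by \eq{delta2}.

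The crux — and the step I expect to be the main obstacle — is \textbf{uniqueness}, equivalently showing $u_* = u^*$; the order-reversing nature of $T$ means the usual single monotone-sequence argument does not close the gap for free. Here I would exploit the sublinearity/concavity coming from $0<\alpha<1$: for $t>1$ one has $(tu)^{-\alpha} = t^{-\alpha} u^{-\alpha} > t^{-1} u^{-\alpha}$, so $T(tu) > t^{-1} T u$, i.e. $T$ is "strongly sublinear" in a sense compatible with the reversal. Concretely, suppose $u, v$ are two solutions with $u,v \asymp \delta^2$; then the set of $t \ge 1$ with $v \le t u$ is nonempty, let $t_0$ be its infimum, and assume $t_0 > 1$ for contradiction. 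From $v = Tv$ and $u \le v \le t_0 u$... — the right move is to compare $v = Tv$ against $T(t_0 u) \le T u \cdot t_0^{?}$; because $T$ is order-reversing this needs to be arranged carefully, so I would instead work with $S = T\circ T$ (order-preserving) and show $S$ satisfies $S(tu) \ge t^{\alpha^2} Su$ for $0<t<1$ (two applications each contributing a factor, $\alpha\cdot\alpha$), combined with $S(tu)\le t^{\alpha^2} S u$ reasoning for $t>1$ — the exponent $\alpha^2<1$ then forces the standard sub-homogeneity uniqueness argument (à la Krasnoselskii / Brezis--Oswald) to pin $t_0 = 1$. Alternatively, and perhaps more cleanly, uniqueness follows from a direct integral identity: if $u,v$ both solve \eq{intg}, multiply the difference $u - v = T u - T v$ by $u^{-\alpha} - v^{-\alpha}$, integrate, use symmetry of $G$ and positivity to get $\int_\o \int_\o G(x,y)(u^{-\alpha}(x)-v^{-\alpha}(x))(u^{-\alpha}(y)-v^{-\alpha}(y))\,dx\,dy \le 0$ against the sign forced by monotonicity of $t\mapsto -t^{-\alpha}$ — this "Green-function-as-positive-kernel" trick is the slickest route and I would pursue it first.

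Finally, for regularity: once $u \asymp \delta^2$ is known, the right-hand side $f := u^{-\alpha} \asymp \delta^{-2\alpha}$ lies in $L^q(\o)$ for every $q < n/(2\alpha)$, and near the boundary $f \in C^{0,\beta}_{loc}(\o)$ with a weighted estimate $|D^2 f(x)| \le C\delta^{-2\alpha-2}(x)$. Writing $u = Tf$ and invoking the Dall'Acqua--Sweers Green function derivative bounds (or, equivalently, Agmon--Douglis--Nirenberg $L^q$ and Schauder estimates applied on a Whitney decomposition of $\o$), one gets $u \in W^{4,q}(\o)$ hence $u \in C^{3,\gamma}(\oo)$ when $4 - n/q > 3$, i.e. $n/q < 1$, i.e. $2\alpha < 1$ — this gives the $C^3(\oo)$ claim for $0<\alpha<1/2$. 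For the full range $0<\alpha<1$ one still has $4 - n/q > 2$ as long as $2\alpha < 2$, always true, yielding $u \in C^{2,\gamma}(\oo) \subset C^2(\oo)$. The boundary behaviour $u = \partial_\nu u = 0$ is automatic from the representation $u = Tf$ since each $G(\cdot,y)$ vanishes to second order at $\partial\o$, which is also what makes the $\delta^2$ rate in \eq{delta2} consistent. The main delicacies in this last part are purely technical: justifying differentiation under the integral sign near $\partial\o$ and checking the weighted Schauder estimate has the stated exponent; no conceptual obstacle remains once \eq{delta2} is in hand.
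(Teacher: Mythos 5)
Your overall architecture (order-reversing operator $T u=\int_\o G(\cdot,y)u^{-\alpha}(y)\,dy$, iteration for existence, $\alpha^2$-sub-homogeneity for uniqueness, Green-function bounds for regularity) is reasonable, and your uniqueness argument via $S=T\circ T$ with exponent $\alpha^2<1$ is essentially the paper's own uniqueness proof. But there are two genuine gaps.

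First, the ``key input'' you rely on, the two-sided estimate $\int_\o G(x,y)\delta^{-2\alpha}(y)\,dy\asymp\delta^2(x)$, is only half available. Dall'Acqua--Sweers gives the upper bound, but hypothesis $(A2)$ is \emph{only} positivity of $G$; neither $(A1)$ nor $(A2)$ provides a pointwise lower bound of the type $G(x,y)\gtrsim\delta^2(x)\delta^2(y)|x-y|^{-n}$, and no such bound is cited or proved in the paper. Your lower barrier $c_1\delta^2\leq T(c_2\delta^2)$, and also the starting inequality $u_i\geq c\,\delta^2$ needed before you can even begin the comparison between two arbitrary solutions, therefore rest on an unproved estimate. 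The paper gets the lower bound in \eq{delta2} using only positivity of $G$: with $a=\vp^2$ one has $\Delta^2 a$ bounded and $a=\partial_\nu a=0$ on $\partial\o$, so for any continuous solution $u$ (whence $u^{-\alpha}\geq(\max u)^{-\alpha}>0$) one can pick $m>0$ small with $u^{-\alpha}\geq m\,\Delta^2 a$, and then $u-ma=\int_\o G(x,y)\bigl[u^{-\alpha}(y)-m\,\Delta^2a(y)\bigr]dy\geq0$, giving $u\geq c\,\delta^2$. Some device of this kind is indispensable in your scheme as well.

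Second, the regularity bootstrap is quantitatively wrong. Since $\delta$ is the distance to a codimension-one hypersurface, $\delta^{-2\alpha}\in L^q(\o)$ if and only if $2\alpha q<1$, not $q<n/(2\alpha)$ (you computed as if $\delta$ behaved like distance to a point). With the correct range $q<1/(2\alpha)$, the embedding $W^{4,q}\hookrightarrow C^{2}(\oo)$ requires $q>n/2$, hence $\alpha<1/n$, and $C^{3}$ requires $q>n$, hence $\alpha<1/(2n)$ --- far short of the claimed ranges $\alpha<1$ and $\alpha<1/2$. The paper avoids this loss by differentiating under the integral sign and exploiting the weighted factor $\min\{1,\delta(y)/|x-y|\}^{2}$ in the Dall'Acqua--Sweers bounds for $D_x^kG$, which absorbs the singular weight $\delta^{-2\alpha}(y)$; a plain ADN/$L^q$ argument discards exactly this gain. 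Your alternative Whitney/weighted-Schauder remark could in principle be developed (e.g.\ proving $|D^4u|\lesssim\delta^{-2\alpha}$ and integrating transversally), but as written it is an assertion, not an argument. Finally, your ``positive kernel'' uniqueness identity is a plausible alternative to the paper's, but note that the pointwise positivity of $G$ is irrelevant there (positive semidefiniteness of $(\Delta^2)^{-1}$ is what is used), and one must check integrability of $u^{-\alpha}-v^{-\alpha}\asymp\delta^{-2\alpha}$ in the relevant duality before the manipulation is legitimate.
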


The existence of a solution will be obtained by means of 
Schauder fixed point theorem. To this aim, we employ the sharp
estimates for Green function given in \cite{d}. The uniqueness
relies heavily on the boundary estimate \eq{delta2} which is obtained 
by using the behavior of the Green function (see Proposition \ref{p2} below).

The remaining part of the paper is organized as follows. In Section
2 we derive some preliminary results concerning \eq{b}. Section 3 is
devoted to the proof of Theorem \ref{t1}.

\newpage

\section{Preliminary results}

In this section we colect some useful results regarding problem
\eq{b}. The first result in this sense is due to Dall'Acqua and
Sweers \cite[Theorem 12, Lemma C.2]{d} and provides upper bounds for
the Green function of the biharmonic operator subject to Dirichlet
boundary conditions.

\begin{pro}\label{p1} {\rm (see \cite{d}) }
Let $k$ be a $n-$dimensional multi-index. Then, there exists a
positive constant $c$ depending on $\o$ and $k$ such that for any
$x,y\in\o$ we have

\begin{enumerate}
\item[(i)] For $|k|\geq 2:$
\begin{enumerate}
\item[(i1)] if $n>4-|k|$ then
$$
|D_x^k G(x,y)|\leq c|x-y|^{4-n-|k|}\min\left\{1,\frac{\delta(y)}{|x-y|} \right\}^2,
$$
\item[(i2)] if $n=4-|k|$ then
$$
|D_x^k G(x,y)|\leq c\log \left (2+\frac{\delta(y)}{|x-y|}
\right)\min\left\{1,\frac{\delta(y)}{|x-y|} \right\}^2,
$$
\item[(i3)] if $n<4-|k|$ then
$$
|D_x^k G(x,y)|\leq
c\delta(y)^{4-n-|k|}\min\left\{1,\frac{\delta(y)}{|x-y|}
\right\}^{n+|k|-2}.
$$
\end{enumerate}
\item[(ii)]  For $|k|< 2:$
\begin{enumerate}
\item[(ii1)] if $n>4-|k|$ then
$$
|D_x^k G(x,y)|\leq
c|x-y|^{4-n-|k|}\min\left\{1,\frac{\delta(x)}{|x-y|}
\right\}^{2-|k|} \min\left\{1,\frac{\delta(y)}{|x-y|} \right\}^{2},
$$
\item[(ii2)] if $n=4-|k|$ then
$$
|D_x^k G(x,y)|\leq c\log \left (2+\frac{\delta(y)}{|x-y|}
\right)\min\left\{1,\frac{\delta(x)}{|x-y|}
\right\}^{2-|k|}\min\left\{1,\frac{\delta(y)}{|x-y|} \right\}^2,
$$
\item[(ii3)] if $2(2-|k|)\leq n<4-|k|$ then
$$
|D_x^k G(x,y)|\leq
c\delta(y)^{4-n-|k|}\min\left\{1,\frac{\delta(x)}{|x-y|}
\right\}^{2-|k|} \min\left\{1,\frac{\delta(y)}{|x-y|}
\right\}^{n+|k|-2},
$$

\item[(ii4)] if $n<2(2-|k|)$ then
$$
|D_x^k G(x,y)|\leq
c\delta^{2-|k|-n/2}(x)\delta^{2-n/2}(y)\min\left\{1,\frac{\delta(x)}{|x-y|}
\right\}^{n/2} \min\left\{1,\frac{\delta(y)}{|x-y|} \right\}^{n/2}.
$$

\end{enumerate}
\end{enumerate}
\end{pro}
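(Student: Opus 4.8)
The bounds in Proposition \ref{p1} are quoted verbatim from \cite{d}, so the plan here is to outline the strategy by which such sharp pointwise estimates are produced rather than to reproduce the full argument. There are three ingredients: an explicit model estimate on a ball, a localization argument that transplants it to a neighbourhood of $\partial\o$ through a boundary-flattening diffeomorphism, and interior elliptic regularity far from $\partial\o$.

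First I would settle the model case. On the unit ball $B\subset\RR^n$, Boggio's formula gives the biharmonic Green function explicitly as $G_B(x,y)=k_n|x-y|^{4-n}\int_1^{[x,y]/|x-y|}(s^2-1)s^{1-n}\,ds$, where $[x,y]^2=|x-y|^2+(1-|x|^2)(1-|y|^2)$, with the usual logarithmic modifications of the formula when $n\in\{2,4\}$. Since $1-|x|^2\asymp\delta(x)$ and $1-|y|^2\asymp\delta(y)$ near $\partial B$, differentiating this expression in $x$ up to order $|k|$ reads off directly the generic bound $|D_x^kG_B(x,y)|\lesssim|x-y|^{4-n-|k|}\min\{1,\delta(y)/|x-y|\}^2$, together with its log- and $\delta$-power variants in low dimensions; the factor $\min\{1,\delta(y)/|x-y|\}^2$ records the double zero of $G_B(\cdot,y)$ on $\partial B$ combined with the symmetry $G_B(x,y)=G_B(y,x)$. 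For $|k|<2$ one gains the further factor $\min\{1,\delta(x)/|x-y|\}^{2-|k|}$, because $G_B(\cdot,y)$ and $\nabla_xG_B(\cdot,y)$ vanish on $\partial B$, so a Taylor expansion of $D_x^kG_B(\cdot,y)$ at a nearest boundary point (for $x$ near $\partial B$ and away from $y$) contributes $\delta(x)^{2-|k|}$; no such gain survives once $|k|\ge2$.

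Then I would transplant this to a general domain satisfying $(A1)$. Away from $\partial\o$, the Green function differs from the whole-space fundamental solution $\Phi$ of $\Delta^2$ by a biharmonic remainder, and interior elliptic regularity controls all derivatives of that remainder; combined with the size of $D^k\Phi$ this gives the stated bounds in the near-diagonal regime, where every $\min\{\cdots\}$ factor equals $1$. Near $\partial\o$ I would localize: cover $\partial\o$ by finitely many charts in which a $C^{16}$ (resp.\ $C^{12}$) diffeomorphism flattens the boundary, turning $\Delta^2$ into a fourth-order operator $\mathcal L$ with variable coefficients that is a perturbation of the half-space biharmonic operator. Writing the transported Green function as the half-space Green function plus a remainder, the remainder solves an $\mathcal L$-equation with homogeneous Dirichlet data and right-hand side built from the already-controlled half-space model; bootstrapping the a priori Schauder and $L^p$ theory for $\mathcal L$ on dyadic annuli then yields the derivative bounds, while the homogeneous Dirichlet data regenerates the factor $\min\{1,\delta(x)/|x-y|\}^{2-|k|}$ for $|k|<2$ via the same boundary Taylor expansion. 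A partition of unity glues the chart estimates to the interior estimate, and the various dimensional sub-cases simply track the behaviour of $\Phi$ and of the model integral, the thresholds $n=4-|k|$ and $n=2(2-|k|)$ being exactly where a logarithm or a pure power of $\delta$ replaces the generic power of $|x-y|$.

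The main obstacle is the boundary-regime perturbation argument: one must control precisely how the flattening diffeomorphism distorts $\Delta^2$ and show that the resulting lower-order corrections do not erode the sharp powers of $\delta(x)$ and $\delta(y)$. This is exactly where the strong boundary regularity in $(A1)$ is consumed — iterating the a priori estimates up to order $|k|$ while preserving the claimed decay needs control of enough derivatives of the chart coefficients, and the stronger requirement for $n=2$ than for $n\ge3$ reflects both the extra iteration steps and the more delicate low-dimensional behaviour of $\Phi$. A secondary, purely bookkeeping, difficulty is handling the transitions across the dimension thresholds, where the model integral and hence the estimate change form. For these reasons the clean statement above is best imported from \cite{d}, as is done here.
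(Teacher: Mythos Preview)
Your proposal is appropriate: the paper itself offers no proof of Proposition~\ref{p1} at all, simply citing it from \cite[Theorem~12, Lemma~C.2]{d}. You correctly identify this and then go further by sketching the Dall'Acqua--Sweers strategy (Boggio's formula on the ball, boundary flattening, interior regularity, dyadic Schauder iteration), which is a faithful outline of how \cite{d} obtains these estimates and is strictly more informative than the paper's bare citation.
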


Let $\vp$ be the first eigenfunction of $(-\Delta)$ in $H_0^1(\o)$. It is well known that $\vp$ has constant sign in $\o$, so by a suitable normalization we may assume $\vp>0$ in $\o$. Therefore, $\vp$ satisfies
\neweq{ff}
\left\{\begin{aligned}
&-\Delta \vp =\la_1 \vp\ , \; \vp>0 &&\mbox{ in } \Omega,\\
&\vp=0&& \mbox{ on }\partial\Omega,
\end{aligned}\right.
\endeq
where $\la_1>0$ is the first eigenvalue of $(-\Delta)$. By Hopf
maximum principle \cite{pr} we have $\partial_\nu \vp<0$ on
$\partial\o$. Also, by the regularity of $\o$ we have $\vp\in C^4(\oo)$ and
\neweq{fi}
c\delta(x)\leq \vp(x)\leq \frac{1}{c}\delta(x)\quad\mbox{ in }\o,
\endeq
for some $0<c<1$.

\begin{pro}\label{p2}
Let $u$ be a solution of problem \eq{b}. Then, there exist
$c_1,c_2>0$ such that $u$ satisfies \eq{delta2}.
\end{pro}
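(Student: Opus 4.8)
The plan is to establish the two inequalities in \eq{delta2} separately. For the lower bound I would compare $u$ with an explicit subsolution built from the first eigenfunction $\vp$, using only the positivity $(A2)$ of $G$; for the upper bound I would feed this lower bound back into the integral equation \eq{intg} and estimate the resulting Green potential by means of Proposition \ref{p1}.

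\textbf{Lower bound.} Since $u$ is a solution we have $u\in C(\oo)$ and $u>0$ in $\o$, so $M:=\max_{\oo}u\in(0,\infty)$ and $u^{-\alpha}\geq m:=M^{-\alpha}>0$ in $\o$. Because $\vp\in C^4(\oo)$ solves \eq{ff}, a direct computation gives
$$
\Delta^2(\vp^2)=4\la_1^2\vp^2-8\la_1|\nabla\vp|^2+4|D^2\vp|^2\leq C_0\quad\mbox{ in }\o,
$$
where $C_0:=4\la_1^2\|\vp\|_\infty^2+4\|D^2\vp\|_\infty^2>0$ (the middle term is $\leq 0$), while $\vp^2=\partial_\nu(\vp^2)=0$ on $\partial\o$. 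As $\vp^2\in C^4(\oo)$ vanishes to second order on $\partial\o$ and $\Delta^2(\vp^2)\in C(\oo)$, the Green representation formula yields $\vp^2(x)=\int_\o G(x,y)\,\Delta^2(\vp^2)(y)\,dy$. Subtracting $\frac{m}{C_0}$ times this identity from \eq{intg},
$$
u(x)-\frac{m}{C_0}\vp^2(x)=\int_\o G(x,y)\Big(u^{-\alpha}(y)-\frac{m}{C_0}\Delta^2(\vp^2)(y)\Big)\,dy\geq 0\quad\mbox{ in }\o,
$$
since $u^{-\alpha}\geq m\geq\frac{m}{C_0}\Delta^2(\vp^2)$ in $\o$ and $G\geq 0$ by $(A2)$. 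Combined with \eq{fi} this gives $u\geq\frac{m}{C_0}\vp^2\geq\frac{mc^2}{C_0}\,\delta^2=:c_1\delta^2$ in $\o$.

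\textbf{Upper bound.} From $u\geq c_1\delta^2$ we get $u^{-\alpha}\leq c_1^{-\alpha}\delta^{-2\alpha}$ in $\o$, so by \eq{intg} it suffices to prove
$$
\int_\o G(x,y)\,\delta(y)^{-2\alpha}\,dy\leq C\,\delta^2(x)\quad\mbox{ for all }x\in\o,
$$
with $C$ independent of $x$. I would apply Proposition \ref{p1} with $k=0$ (case (ii1) if $n\geq 5$, (ii2) if $n=4$, (ii4) if $n\in\{2,3\}$) and use $\min\{1,\delta(x)/|x-y|\}^2\leq\delta^2(x)|x-y|^{-2}$, respectively $\delta(x)^{2-n/2}\min\{1,\delta(x)/|x-y|\}^{n/2}\leq\delta^2(x)|x-y|^{-n/2}$ when $n\in\{2,3\}$, to factor out $\delta^2(x)$; one is then reduced to checking that $\int_\o K(x,y)\,\delta(y)^{-2\alpha}\,dy$ is bounded uniformly in $x$, where $K(x,y)=|x-y|^{2-n}\min\{1,\delta(y)/|x-y|\}^2$ for $n\geq 5$ (with an extra factor $\log(2+\delta(y)/|x-y|)$ when $n=4$) and $K(x,y)=|x-y|^{-n/2}\delta(y)^{2-n/2}\min\{1,\delta(y)/|x-y|\}^{n/2}$ for $n\in\{2,3\}$. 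Splitting $\o$ into $\{\delta(y)\leq|x-y|\}$ and $\{\delta(y)>|x-y|\}$ and, on each piece, replacing $\delta(y)$ by $|x-y|$ in whichever factor carries a positive (resp.\ negative) power of $\delta(y)$, the integrand is dominated there by $|x-y|^{2-2\alpha-n}$ or by $|x-y|^{-n/2}$ (for $n=4$ one first estimates $\log(2+t)\leq C_\sigma t^\sigma$ for $t\geq 1$ with $0<\sigma<2\alpha$ to absorb the logarithm), and both are integrable over $\{|x-y|<{\rm diam}\,\o\}$ with a bound independent of $x$ precisely because $0<\alpha<1$. This gives $u\leq c_2\delta^2$ in $\o$ with $c_2:=c_1^{-\alpha}C$.

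The main obstacle will be this last potential estimate: it needs a somewhat delicate, dimension-by-dimension splitting of $\o$ near the diagonal $y=x$ and near $\partial\o$, together with the observation that every power of $|x-y|$ and of $\delta(y)$ that appears is harmless exactly under the hypothesis $0<\alpha<1$ (this is where the restriction on $\alpha$ enters), plus the separate handling of the borderline logarithmic case $n=4$. The lower bound, by contrast, is short: its only ingredient beyond an elementary computation is the positivity $(A2)$, which is used solely through the sign of the integrand in the displayed identity, so no lower bound on $G$ is required.
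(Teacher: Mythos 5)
Your proposal is correct and follows essentially the same route as the paper: the lower bound by comparing $u$ with a small multiple of $\vp^2$ through the Green representation and the positivity assumption $(A2)$, and the upper bound by inserting $u\geq c_1\delta^2$ into \eq{intg} and using Proposition \ref{p1} to factor out $\delta^2(x)$ and reduce matters to a uniformly bounded potential. The only differences are cosmetic: where you split $\o$ into $\{\delta(y)\leq|x-y|\}$ and $\{\delta(y)>|x-y|\}$ and absorb the $n=4$ logarithm into a small power, the paper simply lowers the exponent of the factor $\min\{1,\delta(y)/|x-y|\}$ to $2\alpha$ (resp.\ to a suitable $\beta$ for $n=2,3$, keeping the logarithm when $n=4$), arriving at the same integrable bounds of the form $|x-y|^{2-2\alpha-n}\delta^2(x)\delta^{2\alpha}(y)$.
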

\begin{proof}
Let $a(x)=\vp^2(x)$, $x\in\oo$. It is easy to see that since $\vp\in C^4(\oo)$ then
$$
f:=\Delta^2 a=2\la_1^2\vp^2+\sum_{i=1}^n\left[8\frac{\partial
\vp}{\partial x_i}\frac{\partial^3 \vp}{\partial
x_i^3}+6\left(\frac{\partial^2 \vp}{\partial x_i^2}\right)^2 \right]
$$
is bounded in $\oo$, so, by the continuity of $u$ there exists $m>0$
small enough such that
$$
u(x)-ma(x)=\int_\o G(x,y) \Big[u^{-\alpha}(y)-m f(y)\Big] dy\geq
0\quad\mbox{ for all }x\in\o.
$$
Therefore, 
\neweq{e1}
u(x)\geq ma(x)\geq c_0\delta^2(x)\quad\mbox{ in }\o,
\endeq
for some $c_0>0$. This proves the first part of the inequality in
\eq{delta2}. For the second part, assume first $n>4$ and let $x\in
\o$. Using Proposition \ref{p1}(ii1), for all $y\in\o$  we have
\neweq{green}
\begin{aligned}
G(x,y)&\leq c|x-y|^{2-n}\delta^2(x)\min\left\{1,\frac{\delta(y)}{|x-y|} \right\}^{2}\\
&\leq c|x-y|^{2-n}\delta^{2}(x)\min\left\{1,\frac{\delta(y)}{|x-y|} \right\}^{2\alpha}\\
&=c |x-y|^{2-2\alpha-n}\delta^2(x)\delta^{2\alpha}(y).
\end{aligned}
\endeq
Now, from \eq{e1} and \eq{green} we have
\neweq{ww}
\begin{aligned}
u(x)&=\int_\o G(x,y) u^{-\alpha}(y) dy\\
&\leq c_1 \int_\o G(x,y)\delta^{-2\alpha}(y)dy\
\leq c_2\delta^2(x)\int_\o |x-y|^{2-2\alpha-n}dy\\
&\leq c_2\delta^2(x)\int_{0\leq |x-y|\leq {\rm diam}(\o)} |x-y|^{2-2\alpha-n}dy\\
&=c_2\delta^2(x)\int_0^{{\rm diam}(\o)}t^{1-2\alpha}dt\\
&\leq c_3\delta^2(x).
\end{aligned}
\endeq

Assume now $n=4$. We use Proposition \ref{p1}(ii2) to derive a
similar inequality to \eq{green}. More precisely, for all $y\in\o$
we have
\neweq{w1}
\begin{aligned}
G(x,y)&\leq c\log\left(2+\frac{\delta(y)}{|x-y|}\right) \min\left\{1,\frac{\delta(x)}{|x-y|} \right\}^{2}
\min\left\{1,\frac{\delta(y)}{|x-y|} \right\}^{2\alpha}\\
&\leq
c|x-y|^{-2-2\alpha}\delta^2(x)\delta^{2\alpha}(y)\log\left(2+\frac{{\rm
diam}(\o)}{|x-y|}\right).
\end{aligned}
\endeq
If $n=3$, let $\beta=\max\{0,2\alpha-1/2\}<3/2$ and by Proposition
\ref{p1}(ii4) we have
\neweq{w2}
\begin{aligned}
G(x,y)&\leq c\delta^{1/2}(x) \delta^{1/2}(y)
\min\left\{1,\frac{\delta(x)}{|x-y|} \right\}^{3/2}
\min\left\{1,\frac{\delta(y)}{|x-y|} \right\}^{3/2}\\
&\leq c|x-y|^{-3/2-\beta}\delta^2(x)\delta^{\beta+1/2}(y)\\
&\leq C|x-y|^{-3/2-\beta}\delta^2(x)\delta^{2\alpha}(y).
\end{aligned}
\endeq
Finally, if $n=2$, let $\beta=\max\{0,2\alpha-1\}<1$ and by
Proposition \ref{p1}(ii3) we have
\neweq{w3}
\begin{aligned}
G(x,y)&\leq c\delta(x) \delta(y)
\min\left\{1,\frac{\delta(x)}{|x-y|} \right\}
\min\left\{1,\frac{\delta(y)}{|x-y|} \right\}\\
&\leq c|x-y|^{-1}\delta^2(x) \delta(y)
\min\left\{1,\frac{\delta(y)}{|x-y|} \right\}^{\beta}\\
&\leq c|x-y|^{-1-\beta}\delta^2(x) \delta^{1+\beta}(y)\\
&\leq C|x-y|^{-1-\beta}\delta^2(x) \delta^{2\alpha}(y).
\end{aligned}
\endeq
We now use the estimates \eq{w1}-\eq{w3} to derive a similar
inequality to that in \eq{ww}.

This completes the proof of Proposition \ref{p2}.
\end{proof}

\begin{pro}\label{p3}
Let $0<\alpha<1$ and $u\in C(\oo)$ be such that $u(x)\geq c\delta^2(x)$
in $\o$ for some $c>0$. Consider
\[
w(x)=\int_\o G(x,y) u^{-\alpha}(y) dy \quad\mbox{ for all }x\in\oo.
\]
Then
\begin{enumerate}
\item[(i)] $w\in C^2(\oo)$;
\item[(ii)] $w \in C^3(\oo)$ for any $0<\alpha<1/2$.
\end{enumerate}
\end{pro}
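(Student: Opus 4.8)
The plan is to exhibit $w$ and each of its $x$-derivatives of order at most two (resp.\ at most three) as an absolutely convergent integral against $G$, and then to show that these integrals depend continuously on $x$ up to $\partial\o$. For a multi-index $k$ write
\[
w_k(x)=\int_\o D_x^k G(x,y)\,u^{-\alpha}(y)\,dy ,
\]
whenever this makes sense. The first step is the pointwise majorisation that makes $w_k$ meaningful for $|k|\le 2$ (resp.\ $|k|\le 3$). Since $u(x)\ge c\,\delta^2(x)$ in $\o$ we have $u^{-\alpha}(y)\le c^{-\alpha}\delta^{-2\alpha}(y)$; inserting this into the estimates of Proposition \ref{p1}, discarding the $\delta(x)$-factors, and using $\min\{1,\delta(y)/|x-y|\}^2\le(\delta(y)/|x-y|)^{2\alpha}$ (valid because $2\alpha<2$) to absorb the weight $\delta^{-2\alpha}(y)$, one checks in every dimension $n\ge 2$ and for each such $k$ that $|D_x^k G(x,y)|\,u^{-\alpha}(y)\le C\,\Phi_k(x,y)$, where $\Phi_k$ is bounded away from the diagonal and, as $y\to x$, $\Phi_k(x,y)\lesssim |x-y|^{-n+\sigma}$ for some $\sigma=\sigma(n,\alpha,k)>0$ (with an extra logarithmic factor in the borderline case $n=4-|k|$). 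For $|k|\le 2$ one always has $\sigma>0$ when $0<\alpha<1$, whereas for $|k|=3$ only case $(i1)$ of Proposition \ref{p1} occurs (since $n\ge 2>1=4-|k|$) and it yields the exponent $-n-(2\alpha-1)$, so $\sigma=1-2\alpha$ is positive \emph{precisely} when $\alpha<1/2$; this is where the restriction in part (ii) enters. In particular each $w_k$ is a finite integral for every $x\in\oo$, $\sup_{x\in\oo}\int_\o|D_x^kG(x,y)|\,u^{-\alpha}(y)\,dy<\infty$, and $\int_{B(x_0,\rho)\cap\o}|D_x^kG(x,y)|\,u^{-\alpha}(y)\,dy\to 0$ as $\rho\to 0$, uniformly in $x,x_0\in\oo$.

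Next I would prove that $w_k\in C(\oo)$ for $|k|\le 2$ (resp.\ $|k|\le 3$). Fix $x_0\in\oo$ and $\varepsilon>0$, and pick $\rho>0$ so small that $\int_{B(x_0,2\rho)\cap\o}|D_x^kG(x,y)|\,u^{-\alpha}(y)\,dy<\varepsilon$ for all $x$ with $|x-x_0|\le\rho$ (possible by the last assertion of Step 1, since $B(x_0,2\rho)\subset B(x,3\rho)$). Writing $w_k(x)=\int_{B(x_0,2\rho)\cap\o}+\int_{\o\setminus B(x_0,2\rho)}$, on the second set one has $|x-y|\ge\rho$ for such $x$, hence the integrand stays bounded there, while for each fixed $y$ in that set the map $x\mapsto D_x^kG(x,y)$ is continuous at $x_0$ — this is interior and boundary elliptic regularity for $G(\cdot,y)$, which is biharmonic near $x_0$ and satisfies $G(\cdot,y)=\partial_\nu G(\cdot,y)=0$ on $\partial\o$, and is where the smoothness hypothesis $(A1)$ is used. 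Dominated convergence gives $\int_{\o\setminus B(x_0,2\rho)}D_x^kG(x,y)u^{-\alpha}(y)\,dy\to\int_{\o\setminus B(x_0,2\rho)}D_{x_0}^kG(x_0,y)u^{-\alpha}(y)\,dy$ as $x\to x_0$, and together with the two tails of size $<\varepsilon$ this yields $\limsup_{x\to x_0}|w_k(x)-w_k(x_0)|\le 2\varepsilon$. Hence $w_k\in C(\oo)$.

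It then remains to identify $w_k$ with the classical derivative $D^kw$ on $\o$, which I would do by induction on $|k|$, the case $|k|=0$ being $w=w_0\in C(\oo)$. Assume $D^kw=w_k$ on $\o$ for some $k$ with $|k|\le 1$ (resp.\ $|k|\le 2$), fix $x\in\o$ and a coordinate direction $e_i$, and for small $|h|$ consider $h^{-1}\big(w_k(x+he_i)-w_k(x)\big)$; by the mean value theorem applied to $t\mapsto D_x^kG(x+te_i,y)$ its integrand equals $D_x^{k+e_i}G(\xi_{h,y},y)\,u^{-\alpha}(y)$ for some $\xi_{h,y}\in[x,x+he_i]$. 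Fix $\delta_0>0$ and split the $y$-integral over $\{|x-y|>2\delta_0\}$ and $\{|x-y|\le 2\delta_0\}$. For $|h|<\delta_0$ one has $|\xi_{h,y}-y|\ge\tfrac12|x-y|$ on the first set, so by Step 1 the integrand is dominated there, uniformly in $h$, by an integrable function of $y$, and dominated convergence makes that part converge as $h\to 0$ to $\int_{\{|x-y|>2\delta_0\}}D_x^{k+e_i}G(x,y)u^{-\alpha}(y)\,dy$. On the second set, applying Step 1 at $\xi_{h,y}$ and using $\int_{B(x,2\delta_0)}|\xi-y|^{-n+\sigma}\,dy\le c_n\delta_0^{\sigma}$ for all $\xi\in B(x,2\delta_0)$, that part is $o(1)$ as $\delta_0\to 0$ uniformly in $h$; likewise $\int_{\{|x-y|\le 2\delta_0\}}|D_x^{k+e_i}G(x,y)|u^{-\alpha}(y)\,dy=o(1)$. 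Letting first $h\to 0$ and then $\delta_0\to 0$ gives $\partial_iw_k(x)=w_{k+e_i}(x)$, which lies in $C(\oo)$ by the previous step. Consequently $w\in C^{2}(\o)$ (resp.\ $C^3(\o)$) and every derivative of $w$ of order $\le 2$ (resp.\ $\le 3$) extends continuously to $\oo$; since a function with this property belongs to $C^2(\oo)$ (resp.\ $C^3(\oo)$), parts (i) and (ii) follow.

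I expect the last step to be the only genuinely delicate one: since $D_x^{k+e_i}G(\xi,y)\,u^{-\alpha}(y)$ has no $h$-independent integrable majorant as $y\to x$, one cannot differentiate naively under the integral sign, and it is the domain-splitting combined with the uniform smallness $\int_{B(x,2\delta_0)}|x-y|^{-n+\sigma}\,dy=O(\delta_0^{\sigma})$ that makes the argument work; everything else is careful bookkeeping with the bounds of Proposition \ref{p1}. A secondary point is that $(A1)$ must be strong enough to provide the three $x$-derivatives of $G$ used in part (ii) together with their continuity off the diagonal — which is exactly the regularity under which Proposition \ref{p1} is stated.
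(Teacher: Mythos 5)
Your argument is correct, and it reaches the conclusion by a genuinely different mechanism than the paper, even though both rest on the same two ingredients: the Dall'Acqua--Sweers bounds of Proposition \ref{p1} and the absorption of the weight $\delta^{-2\alpha}(y)$ into the factor $\min\{1,\delta(y)/|x-y|\}^{2}$ (with the same bookkeeping that makes $|k|=3$ work exactly when $\alpha<1/2$). The paper does not differentiate under the integral sign directly for the singular density: it truncates $u$ near the boundary, setting $u_\ep=0$ on the strip $\o_\ep=\{\delta<\ep\}$, so that $u_\ep^{-\alpha}$ is bounded and the corresponding $w_\ep$ lies in $C^3(\oo)$ with $D^k_xw_\ep=\int_\o D^k_xG(\cdot,y)u_\ep^{-\alpha}(y)\,dy$ by a Gilbarg--Trudinger Lemma 4.1--type result; it then proves that $D^k_xw_\ep$ converges uniformly on $\oo$ to $\int_\o D^k_xG(\cdot,y)u^{-\alpha}(y)\,dy$ at the rate $\ep^{\beta-2\alpha}$, choosing $2\alpha<\beta<2$ (resp.\ $\beta<1$), and concludes by uniform convergence of derivatives. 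Thus in the paper the ``smallness'' is over the boundary strip $\o_\ep$, and the diagonal singularity is delegated to the cited lemma for bounded densities; in your proof the smallness is over a small ball around the diagonal, and you re-prove the differentiation-under-the-integral step from scratch via the mean-value-theorem/difference-quotient splitting, together with a direct continuity-up-to-the-boundary argument for each $w_k$ (which uses the boundary regularity of $D^k_xG(\cdot,y)$ off the diagonal, available under $(A1)$). What the paper's route buys is that continuity of the derivatives up to $\partial\o$ comes for free from uniform convergence of functions already in $C^3(\oo)$; what your route buys is self-containedness and no need for the truncation/approximation layer. Minor points, neither of which is a real gap: in the mean-value step the segment $[x,x+he_i]$ may meet $y$, but only for a null set of $y$, so the pointwise majorant still holds a.e.; and your uniform-smallness claim $\int_{B(x_0,\rho)\cap\o}|D^k_xG(x,y)|u^{-\alpha}(y)\,dy\lesssim\rho^{\sigma}$ uniformly in $x$ is justified because the majorant $|x-y|^{-n+\sigma}$ integrated over any set of the volume of $B(x_0,\rho)$ is maximized on the ball centered at $x$.
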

\begin{proof} With the same proof as in Proposition \ref{p2} it is
easy to see that $v$ is well defined. For $0<\ep<1$ small, define
$\o_\ep=\{x\in\oo:\delta(x)<\ep\}$. Set
$$
u_\ep(x)=\left\{
\begin{aligned}
&u(x)&\quad\mbox{ if } x\in\o\setminus \o_\ep,\\
&0& \quad\mbox{ if } x\in\o_\ep.
\end{aligned}
\right.$$ and
$$
w_\ep(x)=\int_\o G(x,y) u_\ep^{-\alpha}(y) dy \quad\mbox{ for all
}x\in\oo.
$$
Since $u_\ep^{-\alpha}$ is bounded in $\oo$, by the estimates in
Proposition \ref{p1} it follows that $w_\ep\in C^3(\oo)$ and
$$
D_x^kw_\ep(x)=\int_\o D_x^k G(x,y) u_\ep^{-\alpha}(y) dy \quad\mbox{
for all }x\in\oo,
$$
for any $n-$dimensional multi-index $k$ with $|k|\leq 3$. 
The proof of this fact is similar to that of Lemma 4.1 in \cite{gt}.
We employ in the following the same approach as in \cite{gt} to show that $w\in C^2(\oo)$ 
(resp. $w\in C^3(\oo)$ if $0<\alpha<1/2$).

Assume first that $n> 4$ and let $k$ be a $n-$dimensional multi-index with $|k|\leq
2$. Fix $\beta>0$ such that $2\alpha<\beta<2$.

By Proposition \ref{p1}(i1) (if $|k|=2$) and (ii1) (if $|k|\leq 1$)
we have
$$
\begin{aligned}
\left|D_x^k w_\ep(x)-\int_\o D_x^k G(x,y) u^{-\alpha}(y)dy\right|&\leq
\int_{\o_\ep} |D_x^k G(x,y)|u^{-\alpha}(y) dy\\
&\leq c_1 \int_{\o_\ep}
|x-y|^{4-|k|-n}\delta^{-2\alpha}(y)\min\left\{1,\frac{\delta(y)}{|x-y|}
\right\}^{2}dy\\
&\leq c_1 \int_{\o_\ep}
|x-y|^{4-|k|-n}\delta^{-2\alpha}(y)\min\left\{1,\frac{\delta(y)}{|x-y|}
\right\}^{\beta}dy\\
&\leq c_1 \int_{\o_\ep}
|x-y|^{4-|k|-\beta-n}\delta^{\beta-2\alpha}(y)dy\\
&\leq c_1 \ep^{\beta-2\alpha}\int_{\o}
|x-y|^{4-|k|-\beta-n}dy\\
&\leq c_1 \ep^{\beta-2\alpha}\int_{0\leq |x-y|\leq {\rm diam}(\o)}
|x-y|^{4-|k|-\beta-n}dy\\
&\leq c_1 \ep^{\beta-2\alpha}\int_{0}^{{\rm diam}(\o)}
t^{3-|k|-\beta}dt\\
&\leq c_2 \ep^{\beta-2\alpha}\int_{0}^{{\rm diam}(\o)}
t^{1-\beta}dt\leq c_3 \ep^{\beta-2\alpha}\ri 0\quad \mbox{ as
}\ep\ri 0.
\end{aligned}
$$

The case $2\leq n\leq 4$ can be analyzed in the same way. For
instance, if $n=3$ and $|k|=1$, we use Proposition \ref{p1}(ii2) to
derive
$$
\begin{aligned}
\left|D_x^k w_\ep(x)-\int_\o D_x^k G(x,y) u^{-\alpha}(y)dy\right|&\leq
c_1 \int_{\o_\ep} \log\left(2+\frac{\delta(y)}{|x-y|}\right)
\delta^{-2\alpha}(y)\min\left\{1,\frac{\delta(y)}{|x-y|}
\right\}^{2}dy\\
&\leq c_1 \int_{\o_\ep}
|x-y|^{-\beta}\log\left(2+\frac{\delta(y)}{|x-y|}\right)
\delta^{\beta-2\alpha}(y)dy\\
&\leq c_1 \ep^{\beta-2\alpha} \int_{\o_\ep}
|x-y|^{-\beta}\log\left(2+\frac{{\rm diam}(\o)}{|x-y|}\right)dy\\
&\leq c_2 \ep^{\beta-2\alpha}\int_{0}^{{\rm
diam}(\o)}t^{2-\beta}\log\left(2+\frac{{\rm diam}(\o)}{t}\right)dt\\
&\leq c_3 \ep^{\beta-2\alpha}\ri 0\quad \mbox{ as }\ep\ri 0.
\end{aligned}
$$

We have obtained that
$$
D_x^kw_\ep\ri \int_\o D_x^kG(\cdot, y)u^{-\alpha}(y) dy \quad\mbox{
uniformly as } \ep\ri 0,$$ for any $n-$dimensional multi-index $k$ with $0\leq
|k|\leq 2$. It follows that $w\in C^2(\oo)$ and
$$
D_x^kw(x)= \int_\o D_x^kG(x, y)u^{-\alpha}(y) dy \quad\mbox{ for all
} x\in\oo,$$ for any multi-index $k$ with $0\leq |k|\leq 2$.

(ii) Let $k$ be a multi-index with $|k|=3$ and $2\alpha<\beta<1$.
From Proposition \ref{p1}(i1) we have
$$
\begin{aligned}
\left|D_x^k w_\ep(x)-\int_\o D_x^k G(x,y) u^{-\alpha}(y)dy\right|&\leq
\int_{\o_\ep} |D_x^k G(x,y)|u^{-\alpha}(y) dy\\
&\leq c_1 \int_{\o_\ep}
|x-y|^{1-n}\delta^{-2\alpha}(y)\min\left\{1,\frac{\delta(y)}{|x-y|}
\right\}^{\beta}dy\\
&\leq c_1 \ep^{\beta-2\alpha}\int_{\o}
|x-y|^{1-n-\beta}dy\\
&\leq c_1 \ep^{\beta-2\alpha}\int_{0}^{{\rm diam}(\o)}
t^{-\beta}dt\leq c_2 \ep^{\beta-2\alpha}\ri 0\quad \mbox{ as }\ep\ri
0,
\end{aligned}
$$
since $\beta<1$. With the same arguments as above we find $w\in
C^3(\oo)$. This completes the proof.
\end{proof}

\section{Proof of Theorem \ref{t1}}

Let $a(x)=\vp^2(x)$, $x\in\oo$. Motivated by Proposition \ref{p2}
we will be looking for solutions $u$ of \eq{b} in the form $$u(x)=a(x)v(x)$$
where $v\in C(\oo)$, $v>0$ in $\oo$. This leads us to the following integral equation for $v$:
\neweq{v1}
v(x)=\frac{1}{a(x)}\int_\o \frac{G(x,y)}{a^{\alpha}(y)} v^{-\alpha}(y)dy\quad\mbox{ for all }x\in\oo.
\endeq
We can now regard \eq{v1} as the fixed point problem
\[
\mF(v)=v,
\]
where
\[
\mF(v)= \frac{1}{a(x)}\int_\o \frac{G(x,y)}{a^{\alpha}(y)} v^{-\alpha}(y)dy.
\]
Remark that $\mF$ is an integral operator of the form
\[
\mF(v)=\int_\o K(x,y) v^{-\alpha}(y)dy,
\]
where the kernel $K$ is given by
\[
K:\oo\times\o\ri [0,\infty],\quad
K(x,y)=\left\{
\begin{aligned}
&\frac{G(x,y)}{a(x)a^{\alpha}(y)}&&\quad\mbox{ if }x,y\in\o,\\
&\frac{\partial^2_\nu G(x,y)}{\partial^2_\nu a(x) a^\alpha(y)} &&\quad\mbox{ if }x\in\partial\o, y\in\o.
\end{aligned}
\right.
\]
Note that $K$ is well defined since $\partial^2_\nu a(x)=2(\partial_\nu \vp(x))^2>0$ on $\partial\o$.

We first need the following result.

\begin{lema}\label{l1} (i) For any $y\in\o$, the function $K(\cdot,y):\oo\ri [0,\infty]$ is continuous;

(ii) The mapping
\[
\oo\ni x\mapsto \int_\o K(x,y)dy
\]
is continuous and there exists $M>1$ such that
\neweq{m}
\frac{1}{M}\leq \int_\o K(x,y)dy\leq M\quad\mbox{ for all } x\in\oo.
\endeq
\end{lema}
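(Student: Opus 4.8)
The plan is to prove (i) first, since it is the technical core of the lemma, and then to derive (ii) from (i) together with a uniform pointwise upper bound on the kernel $K$ which is essentially already contained in Propositions \ref{p1} and \ref{p2}.

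For part (i), fix $y\in\o$. On $\o$ the function $G(\cdot,y)$ is biharmonic away from $x=y$, hence smooth there, while $G(x,y)\ri+\infty$ as $x\ri y$ when $n\geq4$ and $G(\cdot,y)$ stays finite when $n\in\{2,3\}$; since $a=\vp^2>0$ on $\o$, it follows that $K(\cdot,y)$ is continuous from $\o$ into $[0,\infty]$. The only genuine point is continuity at some $x_0\in\partial\o$. If $x\ri x_0$ with $x\in\partial\o$ this is immediate: by $(A1)$ and elliptic regularity $G(\cdot,y)$ is of class $C^2$ up to $\partial\o$ away from $y$, so $x\mapsto\partial^2_\nu G(x,y)$ is continuous on $\partial\o$, while $x\mapsto\partial^2_\nu a(x)=2(\partial_\nu\vp(x))^2$ is continuous and, since $\partial_\nu\vp<0$ on $\partial\o$, bounded below by a positive constant. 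The remaining case $x\ri x_0$ with $x\in\o$ is a $0/0$ limit, because $G(\cdot,y)$ and $a$ vanish on $\partial\o$ together with their first normal derivatives, and this is the step I expect to be the main obstacle. To handle it I would pass to normal coordinates $(\sigma,t)$ in a collar of $\partial\o$, with $\sigma\in\partial\o$, $t=\delta(x)\in[0,\ep_0)$, set $g(\sigma,t)=G(x(\sigma,t),y)$ and $\widetilde a(\sigma,t)=a(x(\sigma,t))$, and apply Taylor's formula with integral remainder in $t$: since $g(\sigma,0)=\partial_t g(\sigma,0)=0$ and likewise for $\widetilde a$,
\[
\frac{g(\sigma,t)}{t^{2}}=\int_0^1(1-\tau)\,\partial^2_t g(\sigma,\tau t)\,d\tau\ \longrightarrow\ \tfrac12\,\partial^2_t g(\sigma,0)\quad\mbox{as }t\ri0,
\]
uniformly in $\sigma$ near $x_0$ by uniform continuity of the second $t$-derivatives on a compact collar, and similarly for $\widetilde a$. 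As $\partial^2_t\widetilde a(x_0,0)=\partial^2_\nu a(x_0)>0$, dividing the two expansions gives $G(x,y)/a(x)\ri\partial^2_\nu G(x_0,y)/\partial^2_\nu a(x_0)$, and hence $K(x,y)\ri K(x_0,y)$; this proves (i).

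For part (ii), the first task is a uniform pointwise bound on $K$. Carrying out exactly the estimates from the proof of Proposition \ref{p2} --- the inequalities \eq{green}, \eq{w1}, \eq{w2}, \eq{w3} --- using \eq{fi} to replace $a(x),a(y)$ by $\delta^{2}(x),\delta^{2}(y)$, and using $\partial^2_\nu a\geq c>0$ on $\partial\o$ in case $x\in\partial\o$, one obtains a constant $C_0>0$ and an exponent $s<n$ (with $s=n-2+2\alpha$ for $n\geq4$, an extra logarithmic factor appearing when $n=4$, and $s<n$ likewise for $n\in\{2,3\}$) such that
\neweq{kbound}
K(x,y)\leq C_0\,|x-y|^{-s}\qquad\mbox{for all }x\in\oo,\ y\in\o.
\endeq
Since $s<n$, the right-hand side of \eq{kbound} lies in $L^1(\o)$ in the variable $y$, uniformly in $x$; this already gives the upper bound $\int_\o K(x,y)\,dy\leq M_0$ on $\oo$ for some finite $M_0>0$. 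For the lower bound, fix $x\in\o$; using $a^{\alpha}(y)\leq\|a\|_{C(\oo)}^{\alpha}$ we get $\int_\o K(x,y)\,dy\geq\frac{1}{\|a\|_{C(\oo)}^{\alpha}a(x)}\int_\o G(x,y)\,dy$. The function $w_0(x):=\int_\o G(x,y)\,dy$ solves $\Delta^2 w_0=1$ in $\o$ with $w_0=\partial_\nu w_0=0$ on $\partial\o$, so the comparison argument from the proof of Proposition \ref{p2} (comparing with $m\,a$ for $m>0$ small, since $\Delta^2 a$ is bounded) yields $w_0\geq m\,a$ in $\o$, whence $\int_\o K(x,y)\,dy\geq m\,\|a\|_{C(\oo)}^{-\alpha}=:1/M_1>0$ for all $x\in\o$.

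It remains to prove that $\Phi(x):=\int_\o K(x,y)\,dy$ is continuous on $\oo$; then \eq{m} follows with $M=\max\{M_0,M_1,2\}$, because $\Phi$ is then continuous on the compact set $\oo$ and coincides on the dense subset $\o$ with a function bounded below by $1/M_1$, hence $\Phi\geq1/M_1$ on all of $\oo$, while $\Phi\leq M_0$ on $\oo$. For the continuity, let $x_k\ri x_0$ in $\oo$ and split $\o=(B(x_0,\rho)\cap\o)\cup(\o\setminus B(x_0,\rho))$. On $\o\setminus B(x_0,\rho)$ one has $|x_k-y|\geq\rho/2$ for $k$ large, so by \eq{kbound} the integrands are bounded by the constant $C_0(\rho/2)^{-s}$ and, by part (i), converge pointwise to $K(x_0,\cdot)$, so dominated convergence applies; on $B(x_0,\rho)\cap\o$, \eq{kbound} gives $\int_{B(x_0,\rho)\cap\o}K(x_k,y)\,dy\leq C_0\int_{B(x_k,2\rho)}|x_k-y|^{-s}\,dy\leq C\rho^{\,n-s}$ (with the obvious modification when $n=4$), a bound uniform in $k$ that tends to $0$ as $\rho\ri0$, and similarly for $x_0$ in place of $x_k$. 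Combining these shows $|\Phi(x_k)-\Phi(x_0)|\ri0$, so $\Phi$ is continuous on $\oo$, which completes the proof.
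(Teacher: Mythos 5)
Your proof is correct and, for part (i) as well as for the upper bound and the continuity of $x\mapsto\int_\o K(x,y)\,dy$ in part (ii), it follows essentially the same route as the paper: a second-order expansion of $G(\cdot,y)$ and of $a$ in the normal direction, uniform over nearby boundary points, combined with continuity of $\partial^2_\nu G(\cdot,y)$ and $\partial^2_\nu a$ along $\partial\o$ (the paper writes this with remainder terms $G_1,a_1$ instead of your Taylor formula with integral remainder, and splits the approach $x\ri z$ through the nearest boundary point $\bar x$, but the content is the same); and the kernel bound $K(x,y)\leq C|x-y|^{2-2\alpha-n}$ for $n>4$ (with the usual modifications for $2\leq n\leq 4$) obtained from Proposition \ref{p1} and \eq{fi}, followed by a dominated convergence argument --- your splitting of $\o$ into a small ball about $x_0$ and its complement is the careful way to run that limit, since the dominating function in the paper's one-line appeal to Lebesgue's theorem depends on $x$. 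The one place where you genuinely diverge is the lower bound in \eq{m}: the paper disposes of it in one line, from continuity of the integral on the compact set $\oo$ together with $K>0$ in $\o$, whereas you compare $w_0(x)=\int_\o G(x,y)\,dy$, the solution of $\Delta^2 w_0=1$ under Dirichlet conditions, with $ma$ exactly as in the proof of Proposition \ref{p2}, obtaining the uniform estimate $\int_\o K(x,y)\,dy\geq m\,\|a\|_{C(\oo)}^{-\alpha}$ in $\o$ and then extending it to $\oo$ by the already established continuity. Your version is more quantitative and in fact tightens the paper's remark: positivity of $K$ in the open set $\o$ by itself does not immediately exclude that the continuous extension of the integral vanishes at some boundary point, while your comparison argument yields a strictly positive lower bound up to the boundary.
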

\begin{proof}
Since the Green function is continuous on $\o\times \o$, it remains to prove the continuity of
$K(\cdot,y)$ on $\partial\o$. Let $\ep>0$. Since $G(\cdot, y)\in C^4(\oo\setminus\{y\})$ and $a\in C^4(\oo)$, for any
$z\in\partial\o$ we have
$$
\begin{aligned}
G(z+t\nu,y)=&\, t^2\left(\frac{1}{2}\partial^2_\nu G(z,y)+G_1(z,t)\right)\quad\mbox{ as } t\nearrow 0,\\
a(z+t\nu,y)=&\, t^2\left(\frac{1}{2}\partial^2_\nu a(z,y)+a_1(z,t)\right)\quad\mbox{ as } t\nearrow 0,
\end{aligned}
$$
where
$$
\lim_{t\nearrow 0}G_1(z,t)=\lim_{t\nearrow 0}a_1(z,t)=0\quad\mbox{ uniformly for } z\in\partial\o.
$$

Hence, as $t\nearrow 0$ we have
$$
\begin{aligned}
|K(z+t\nu,y)-K(z,y)|&=\left |\frac{\frac{1}{2}\partial^2_\nu G(z,y)+G_1(z,t)}{\frac{1}{2}\partial^2_\nu
a(z,y)+a_1(z,t)}-\frac{\partial^2_\nu G(z,y)}{\partial^2_\nu a(z,y)} \right |\\
&\leq \frac{|G_1(z,y)|\partial^2_\nu a(z,y)+|a_1(z,t)||\partial^2_\nu G(z,y)|}{\partial^2_\nu a(z,y)|\frac{1}{2}\partial^2_\nu
a(z,y)+a_1(z,t)|}.
\end{aligned}
$$
Thus, there exists $\eta_1>0$ such that
\neweq{k1}
|K(z+t\nu,y)-K(z,y)|<\frac{\ep}{2}\quad\mbox{ for all } z\in\partial\o \mbox{ and }-\eta_1<t<0 .
\endeq
Also, by the smoothness of the boundary $\partial\o$ there exists $\eta_2>0$ such that
\neweq{k2}
|K(z,y)-K(\bar z,y)|<\frac{\ep}{2}\quad\mbox{ for all } z,\bar z\in\partial\o, |z-\bar z|<\eta_2.
\endeq
Define $\eta=\min\{\eta_1,\eta_2\}/2$ and fix $z\in\partial\o$. Let now $x\in\oo$ be such that $|x-z|<\eta$. Also,
let $\bar x\in\partial\o$ be such that $|x-\bar x|=\delta(x)={\rm dist}(x,\partial\o)$. Then $|x-\bar x|\leq |x-z|<\eta$ and
$|\bar x-z|\leq |x-\bar x|+|z-x|<2\eta<\eta_2$ so by \eq{k2} we have
\neweq{k3}
|K(\bar x,y)-K(z,y)|<\frac{\ep}{2}.
\endeq
Now, from \eq{k1} and \eq{k3} we obtain
$$
|K(x,y)-K(z,y)|\leq |K(x,y)-K(\bar x,y)|+|K(\bar x,y)-K(z,y)|<\ep
$$
so $K(\cdot,y)$ is continuous at $z\in\partial\o$. This completes the proof of (i).

(ii) Assume first $n>4$. Using \eq{fi} and Proposition \ref{p1}(ii1) we
have
$$
\begin{aligned}
K(x,y) &\leq c_1\delta^{-2}(x)\delta^{-2\alpha}(y)G(x,y)\\
& \leq c_2 |x-y|^{2-n}\delta^{-2\alpha}(y)\min\left\{1,\frac{\delta(y)}{|x-y|} \right\}^{2}\\
&\leq c_2 |x-y|^{2-n}\delta^{-2\alpha}(y)\min\left\{1,\frac{\delta(y)}{|x-y|} \right\}^{2\alpha}\\
&\leq c_2 |x-y|^{2-2\alpha-n} \quad\mbox{ for all }x,y\in\o.
\end{aligned}
$$
Since $0<\alpha<1$, the mapping $x\mapsto |x-y|^{2-2\alpha-n}$ is
integrable on $\o$, so by means of Lebesgue's dominated convergence
Theorem we deduce that $\oo\ni x \mapsto \int_{\o}K(x,y)dy$ is
continuous. This fact combined with $K>0$ in $\o$ proves the
existence of a number $M>1$ that satisfies \eq{m}.

For $2\leq n\leq 4$ we proceed similarly with different estimates (as in the proof of Proposition \ref{p2}) to derive the same conclusion.
\end{proof}

Let $M>1$ satisfy \eq{m} and fix $0<\ep<1$ such that
\neweq{ee}
\ep^{1-\alpha^2}\leq M^{-1-\alpha}.
\endeq
Define
$$
g_\ep:\RR\ri \RR,\quad
g_\ep(t)=\left\{
\begin{aligned}
\ep^{-\alpha}&&\quad\mbox{ if }t<\ep,\\
t^{-\alpha}&&\quad\mbox{ if }t\geq \ep,
\end{aligned}
\right.
$$
and for any $v\in C(\oo)$, $v>0$ in $\oo$ consider the operator
\[
T_\ep(v)(x)=\int_\o K(x,y)g_\ep(v(y))dy\quad\mbox{ for all }x\in\oo.
\]
If $v\in C(\oo)$ satisfies $v>0$ in $\oo$, then $g_\ep(v)\leq \ep^{-\alpha}$ in $\oo$ so by \eq{m} we find
$T_\ep(v)\leq M\ep^{-\alpha}$ in $\oo$. Let now
\[
v_1\equiv M^{-1-\alpha}\ep^{\alpha^2}, \quad  v_2\equiv M\ep^{-\alpha}.
\]
and
\[
[v_1,v_2]=\{ v\in C(\oo): \; v_1\leq v\leq v_2\}.
\]
By Lemma \eq{m} it is easy to see that $T_\ep([v_1,v_2])\subseteq [v_1,v_2]$. Further, by Lemma \ref{l1} and Arzela-Ascoli theorem, it follows that
\[
T_\ep:[v_1,v_2]\ri  [v_1,v_2] \quad\mbox{ is compact}.
\]
Hence, by Schauder fixed point theorem, there exists $v\in C(\oo)$,
$v_1\leq v\leq v_2$ in $\oo$ such that $T_\ep(v)=v$. By \eq{ee} it
follows that $v\geq v_1\geq \ep$ in $\oo$, so $g_\ep(v)=v^{-\alpha}$.
Therefore, $v$ satisfies \eq{v1}, that is, $u=av$ is a solution of
\eq{b}. Now, the the boundary estimate \eq{delta2} and the
regularity of solution $u$ follows from Proposition \ref{p2} and
Proposition \ref{p3} respectively. In the following we derive the
uniqueness of the solution to \eq{b}.

Let $u_1$, $u_2$ be two solutions of \eq{b}. Using Proposition
\ref{p2} there exists $0<c<1$ such that
\neweq{ui}
c\delta^2(x)\leq u_i(x)\leq \frac{1}{c}\delta^2(x)\quad\mbox{  in
}\o,\; i=1,2.
\endeq
This means that we can find a constant $C>1$ such that $Cu_1\geq
u_2$ and $Cu_2\geq u_1$ in $\o$.

We claim that $u_1\geq u_2$ in $\o$. Supposing the contrary, let
$$
M=\inf\{A>1:Au_1\geq u_2 \;\mbox{ in }\o\}.
$$
By our assumption, we have $M>1$. From $Mu_1\geq u_2$ in $\o$, it
follows that
$$
M^\alpha u_2(x)-u_1(x)=\int_\o G(x,y) \Big[M^\alpha
u_2^{-\alpha}(y)-u_1^{-\alpha}(y)\Big] dy\geq 0\quad\mbox{ for all
}x\in\o,
$$
and then
$$
M^{\alpha^2} u_1(x)-u_2(x)=\int_\o G(x,y) \Big[M^{\alpha^2}
u_1^{-\alpha}(y)-u_2^{-\alpha}(y)\Big] dy\geq 0\quad\mbox{ for all
}x\in\o.
$$

We have thus obtained $M^{\alpha^2} u_1\geq u_2$ in $\o$. Since
$M>1$ and $\alpha^2<1$, this last inequality contradicts the
minimality of $M$. Hence, $u_1\geq u_2$ in $\o$. Similarly we deduce
$u_1\leq u_2$ in $\o$, so $u_1\equiv u_2$ and the uniqueness is
proved. This finishes the proof of Theorem \ref{t1}.

\end{document}